\numberwithin{equation}{section}
\numberwithin{figure}{section}
  \theoremstyle{definition}
  \newtheorem{defn}{\protect\definitionname}
  \theoremstyle{plain}
  \newtheorem{cor}{\protect\corollaryname}
  \theoremstyle{plain}
  \newtheorem{prop}{\protect\propositionname}
  \theoremstyle{plain}
  \newtheorem{lem}{\protect\lemmaname}
\theoremstyle{plain}
\newtheorem{thm}{\protect\theoremname}
  \theoremstyle{plain}
  \newtheorem*{lem*}{\protect\lemmaname}
  \providecommand{\definitionname}{Definition}
  \providecommand{\lemmaname}{Lemma}
  \providecommand{\propositionname}{Proposition}
\providecommand{\corollaryname}{Corollary}
\providecommand{\theoremname}{Theorem}
\begin{document}

\title{characteristic classes of involutions in nonsolvable groups}

\author{YOTAM FINE}
\begin{abstract}
\thanks{2010 \textsl{Mathematics Subject Classification}. Primary 20D45. }Let
$G,D_{0},D_{1}$ be finite groups such that $D_{0}\trianglelefteq D_{1}$
are groups of automorphisms of $G$ that contain the inner automorphisms
of $G$. Assume that $D_{1}/D_{0}$ has a normal $2$-complement and
that $D_{1}$ acts fixed-point-freely on the set of $D_{0}$-conjugacy
classes of involutions of $G$ (i.e., $C_{D_{1}}(a)D_{0}<D_{1}$ for
every involution $a\in G$). We prove that $G$ is solvable. We also
construct a nonsolvable finite group that possesses no characteristic
conjugacy class of nontrivial cyclic subgroups. This shows that an
assumption on the structure of $D_{1}/D_{0}$ above must be made in
order to guarantee the solvability of $G$ and also yields a negative
answer to Problem 3.51 in the Kourovka notebook, posed by A. I. Saksonov
in 1969.
\end{abstract}

\address{Raymond and Beverly Sackler School of Mathematical Sciences, Tel-Aviv
University, Tel-Aviv, Israel}

\email{yotamfine5@gmail.com and also yotamfine@mail.tau.ac.il}

\date{February 27, 2019}
\maketitle

\section{introduction }

Let $G$ be a finite group. A well-studied problem in group theory
is finding interesting conditions on $(G,A)$, where $A$ is a fixed-point-free
group of automorphisms of $G$, that guarantee the solvability of
$G$. It is known {[}3{]} that if $A$ is either cyclic or of order
coprime to $\mid G\mid$, then $G$ is solvable. Of course, $A$ acting
fixed-point-freely on $G$ does not guarantee the solvability of $G$,
as the example $(G,G)$ shows whenever $G$ is centerless and nonsolvable.
One property that the cyclic case and the coprime case share is that
$A$ acts fixed point freely on the set of (nontrivial) conjugacy
classes of $G$. This property clearly does not hold in the $(G,G)$
case. This observation leads to the following two questions, the first
of which was posed by A.I Saksonov in the Kourovka notebook {[}2{]}
in 1969:
\begin{quote}
\textbf{Question 1}\textbf{\textit{.}} (Problem 3.51 of {[}2{]})\textit{
Assume that $G$ possesses no characteristic conjugacy class of nontrivial
elements. Must $G$ be solvable?}\textbf{ }
\end{quote}
\begin{quote}
And in case the answer is negative:
\end{quote}
\begin{quote}
\textbf{Question 2.} \textit{Assume that $A$ is a group of automorphisms
of $G$ that acts fixed-point-freely on the set of nontrivial conjugacy
classes of $G$. Are there any mild conditions on $(G,A)$ that guarantee
the solvabilty of $G$?}
\end{quote}
The main results of this paper are Theorem 1 and Theorem 2. Theorem
2 implies that the answer to Question 1 is negative. Theorem 1 implies
that the answer to Question 2 is positive. Before stating the theorems,
a couple of definitions are needed.
\begin{defn}
A group $D_{0}$ acting (from the left) on a finite group $G$ by
automorphisms is called \textsl{ordinary} iff for every $g\in G$
there exists $\alpha\in D_{0}$ such that $\forall a\in G$ $\alpha(a)=a^{g}$.
An \textsl{ordinary triplet} is a triple of finite groups $(G,D_{0},D_{1})$
such that $D_{1}$ acts on $G$ by automorphisms (from the left),
$D_{0}\trianglelefteq D_{1}$ and $D_{0}$ is ordinary.
\end{defn}
\begin{defn}
An ordinary triplet $(G,D_{0},D_{1})$ is called \textsl{wild }iff
$D_{1}$ acts fixed-point-freely on the set of $D_{0}$-conjugacy
classes of involutions of $G$, i.e., $C_{D_{1}}(a)D_{0}<D_{1}$ for
every involution $a\in G$.
\end{defn}
The first main result of this paper is the following criteria for
solvability.
\begin{flushleft}
\textbf{Theorem 1.} \textit{Assume that $(G,D_{0},D_{1})$ is wild
and that $D_{1}/D_{0}$ has a normal 2-complement. Then $G$ is solvable
and if $B$ is a $D_{1}$-invariant subgroup of $G$, then $(G/B,D_{0},D_{1})$
is wild.}
\par\end{flushleft}

The second main result of this paper is the following theorem.
\begin{flushleft}
\textbf{Theorem 2.} \textit{Let $A$ be a finite group. Then there
exists a finite group $G=H\rtimes A$ ($H$ solvable) such that $G$
possesses no characteristic conjugacy class of nontrivial cyclic subgroups.}
\par\end{flushleft}

We now list some corollaries.
\begin{cor}
Let $G$ be a finite nonsolvable group. Assume that $N\trianglelefteq G$
is a normal subgroup such that $G/N$ has a normal $2$-complement.
Then there exists an involution $a\in N$ such that $C_{G}(a)N=G$.
\end{cor}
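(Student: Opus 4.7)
The plan is to argue by contradiction and reduce to Theorem 1. Specifically, suppose towards a contradiction that $C_{G}(a)N<G$ for every involution $a\in N$ (this assumption is vacuous in case $N$ contains no involutions). I consider the triplet $(N,D_{0},D_{1})$ in which the group being acted on is $N$, and the acting groups are $D_{0}=N$ and $D_{1}=G$, each acting on $N$ by conjugation.

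First I verify that $(N,N,G)$ is an ordinary triplet: the normality $D_{0}\trianglelefteq D_{1}$ is just $N\trianglelefteq G$, and ordinariness of $D_{0}=N$ acting on $N$ is immediate, since for every $n\in N$ the element $\alpha=n\in D_{0}$ itself implements conjugation by $n$. The quotient $D_{1}/D_{0}=G/N$ has a normal $2$-complement by hypothesis. Translating the wildness condition through the conjugation action, $C_{D_{1}}(a)D_{0}<D_{1}$ reads as $C_{G}(a)N<G$, which is precisely our standing assumption. Hence $(N,N,G)$ is wild.

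By Theorem 1, $N$ is solvable. On the other hand, $G/N$ has a normal subgroup of odd order (the $2$-complement) with $2$-group quotient, so $G/N$ is solvable by the Feit--Thompson odd-order theorem. Therefore $G$ itself is solvable, contradicting the hypothesis. I do not expect any genuine obstacle; the whole content is packaged inside Theorem 1, and the only real decision is the choice of triplet, which is essentially forced by the requirement that wildness should encode exactly the inequality $C_{G}(a)N<G$ together with the hypothesis on the structure of $G/N$.
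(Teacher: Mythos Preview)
Your argument is correct and is essentially the paper's own proof, which reads in full: ``$N$ is not solvable, and so $(N,N,G)$ is not wild.'' You have simply unpacked this one line into its constituent steps (verifying ordinariness, translating wildness into the centralizer condition, and invoking Feit--Thompson to see that $G/N$ is solvable so that the nonsolvability must live in $N$).
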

\begin{proof}
$N$ is not solvable, and so $(N,N,G)$ is not wild.
\end{proof}
\begin{cor}
Let $G$ be a finite nonsolvable group and $p$ a prime. If every
involution in $G$ is centralized by a Sylow-$p$ subgroup of $G$
(that depends on the involution), then some involution in $G$ is
centralized by a Sylow-$p$ subgroup of $Aut(G)$.
\end{cor}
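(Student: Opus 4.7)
The plan is a contrapositive application of Theorem 1. Fix a Sylow-$p$ subgroup $Q$ of $\mathrm{Aut}(G)$, set $D_{0}=\mathrm{Inn}(G)$, and let $D_{1}=QD_{0}$. Because $\mathrm{Inn}(G)\trianglelefteq\mathrm{Aut}(G)$ and $\mathrm{Inn}(G)$ is ordinary, $(G,D_{0},D_{1})$ is an ordinary triplet. The quotient $D_{1}/D_{0}\cong Q/(Q\cap D_{0})$ is a $p$-group, and hence trivially admits a normal $2$-complement (itself if $p$ is odd, the trivial subgroup if $p=2$), so the hypotheses of Theorem 1 are in place.

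Since $G$ is nonsolvable, Theorem 1 forces $(G,D_{0},D_{1})$ to fail to be wild. Thus there exists an involution $a\in G$ with $C_{D_{1}}(a)D_{0}=D_{1}$. Combined with $D_{1}=QD_{0}$, this yields the inclusion $Q\subseteq C_{\mathrm{Aut}(G)}(a)\cdot D_{0}$.

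It remains to upgrade that inclusion to an honest Sylow-$p$ subgroup of $\mathrm{Aut}(G)$ inside $C_{\mathrm{Aut}(G)}(a)$. Write $K=C_{\mathrm{Aut}(G)}(a)$ and $H=KD_{0}$. The standard index identity gives
\[
 [H:K]\;=\;[D_{0}:K\cap D_{0}]\;=\;[D_{0}:C_{D_{0}}(a)]\;=\;[G:C_{G}(a)],
\]
and this index is coprime to $p$ by the hypothesis that $C_{G}(a)$ contains a Sylow-$p$ subgroup of $G$. Because $Q\le H$ realises the full $p$-part of $|H|$, a Sylow-$p$ subgroup of $K$ has order $|K|_{p}=|H|_{p}=|Q|$ and is therefore a Sylow-$p$ subgroup of $\mathrm{Aut}(G)$. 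Hence $a$ is centralised by a Sylow-$p$ subgroup of $\mathrm{Aut}(G)$, as required.

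The main conceptual step is the choice of $D_{1}=QD_{0}$ rather than the full group $\mathrm{Aut}(G)$: this makes the quotient $D_{1}/D_{0}$ a $p$-group automatically, so no assumption on the structure of $\mathrm{Out}(G)$ is required and Theorem 1 applies directly. Everything else is routine Sylow bookkeeping built on the coprimality of $[G:C_{G}(a)]$ to $p$, which is a direct translation of the hypothesis.
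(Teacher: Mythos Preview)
Your proof is correct and follows essentially the same strategy as the paper: take a Sylow-$p$ subgroup $P$ of $\mathrm{Aut}(G)$, apply Theorem~1 to the triplet with $D_{0}$ the inner automorphisms and $D_{1}=PD_{0}$ (the paper writes this as $(G,G,G\rtimes P)$), and obtain a $P$-invariant conjugacy class $C$ of involutions. The only difference is the final extraction step: the paper notes directly that $P$ acts on $C$ with $|C|$ prime to $p$ and hence fixes a point, whereas you reach the same conclusion via the index computation $[KD_{0}:K]=[G:C_{G}(a)]$; both are routine and equivalent.
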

\begin{proof}
Let $P$ be a Sylow-$p$ subgroup of $Aut(G)$. Looking at $(G,G,G\rtimes P)$,
we see that some conjugacy class of involutions $C$ is $P$-invariant.
From our assumption, $C$ is of order prime to $p$. Thus $P$ centralizes
some element in $C$.
\end{proof}

\section{notation and conventions}

All groups considered in this paper are finite. $^{g}a=a^{(g^{-1})}=gag^{-1}$.
The identity element is denoted by $0$ (yet maintaining multiplicative
notation). The trivial subgroup $\{0\}\leq G$ is identified with
$0$. A written triple in this paper (e.g., ``$(G,D_{0},D_{1})$'')
is automatically assumed to be an ordinary triplet. The action of
the acting groups (in an ordinary triplet) will not be specified as
no confusion will arise. In Section 4, group actions are done from
the right.

\section{lemmas and proof of theorem 1}
\begin{prop}
Assume $(G,D_{0},D_{1})$ is wild. Assume $M\leq G$ and that every
$D_{1}$-conjugate of $M$ is a $D_{0}$-conjugate of $M$ i.e. $N_{D_{1}}(M)D_{0}=D_{1}$.
Then $(M,N_{D_{0}}(M),N_{D_{1}}(M))$ is wild and $N_{D_{1}}(M)/N_{D_{0}}(M)\cong D_{1}/D_{0}$.
\end{prop}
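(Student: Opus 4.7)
The plan is to verify the three assertions in turn: that $(M,N_{D_0}(M),N_{D_1}(M))$ is an ordinary triplet, then the isomorphism, and finally the wildness condition. None of the three steps looks substantively hard; the proof should come out as a sequence of elementary manipulations of products of subgroups together with a careful unpacking of the definitions.

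For the ordinary-triplet assertion, the normality $N_{D_0}(M)\trianglelefteq N_{D_1}(M)$ is automatic from $D_0\trianglelefteq D_1$, and $N_{D_1}(M)$ certainly acts on $M$ by automorphisms. The only nontrivial point is ordinariness of $N_{D_0}(M)$ acting on $M$: given $m\in M\leq G$, ordinariness of $D_0$ yields some $\alpha\in D_0$ realizing conjugation by $m$ on all of $G$; since $m\in M$, this $\alpha$ stabilizes $M$ and so already lies in $N_{D_0}(M)$, and its restriction to $M$ is conjugation by $m$. This is the one step where one should pause to check that the element realizing the inner automorphism is automatically in the normalizer.

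For the isomorphism, apply the second isomorphism theorem. Observe that $N_{D_1}(M)\cap D_0=N_{D_0}(M)$, and by hypothesis $N_{D_1}(M)D_0=D_1$, so
\[
N_{D_1}(M)/N_{D_0}(M)=N_{D_1}(M)/(N_{D_1}(M)\cap D_0)\cong N_{D_1}(M)D_0/D_0=D_1/D_0.
\]

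For the wildness assertion, I will argue by contrapositive. Suppose some involution $a\in M$ satisfies $C_{N_{D_1}(M)}(a)N_{D_0}(M)=N_{D_1}(M)$. Given any $d\in D_1$, use $N_{D_1}(M)D_0=D_1$ to write $d=nd_0$ with $n\in N_{D_1}(M)$ and $d_0\in D_0$, then use the supposed equality to write $n=cn_0$ with $c\in C_{N_{D_1}(M)}(a)\subseteq C_{D_1}(a)$ and $n_0\in N_{D_0}(M)\subseteq D_0$. Then $d=c(n_0d_0)\in C_{D_1}(a)D_0$, giving $C_{D_1}(a)D_0=D_1$ and contradicting the wildness of $(G,D_0,D_1)$. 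The main ``obstacle,'' if any, is purely notational: to keep straight which products are being manipulated in $D_1$ versus in $N_{D_1}(M)$ and to verify that the centralizer $C_{N_{D_1}(M)}(a)$ is visibly contained in $C_{D_1}(a)$.
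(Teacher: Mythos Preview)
Your proposal is correct and follows essentially the same approach as the paper: both use the hypothesis $N_{D_1}(M)D_0=D_1$ to transfer the wildness condition between the two triples, and both rely on $N_{D_1}(M)\cap D_0=N_{D_0}(M)$ for the isomorphism. The only cosmetic difference is that the paper argues the wildness step directly (given an involution $a\in M$, pick $\psi\in D_1$ moving $a$ off its $D_0$-class, adjust by some $\alpha\in D_0$ with $\alpha(M)=\psi(M)$ so that $\alpha^{-1}\psi\in N_{D_1}(M)$ still moves $a$ off its $N_{D_0}(M)$-class), whereas you phrase the same manipulation as a contrapositive; you also spell out the ordinariness and isomorphism steps that the paper dismisses with ``Clearly.''
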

\begin{proof}
Clearly $(M,N_{D_{0}}(M),N_{D_{1}}(M))$ is ordinary and $N_{D_{1}}(M)/N_{D_{0}}(M)\cong D_{1}/D_{0}$.
Let $a\in M$ be an involution. Take $\psi\in D_{1}$ such that $\psi(a)$
is not a $D_{0}$-conjugate of $a$. Take $\alpha\in D_{0}$ such
that $\psi(M)=\alpha(M)$. Now $\alpha^{-1}\psi\in N_{D_{1}}(M)$.
$\alpha^{-1}\psi(a)$ is not a $D_{0}$-conjugate of $a$ and so $\alpha^{-1}\psi(a)$
is not an $N_{D_{0}}(M)$-conjugate of $a$ as well.
\end{proof}
\begin{prop}
Let $(G,D_{0},D_{1})$ be ordinary. Assume $B\leq G$ is $D_{1}$-invariant
(hence also $B$$\trianglelefteq G$). Assume $(G/B,D_{0},D_{1})$
is not wild. Then there exists $B<B_{1}\leq G$ such that $[B_{1}:B]=2$
and $N_{D_{1}}(B_{1})D_{0}=D_{1}$.
\end{prop}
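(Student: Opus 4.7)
The plan is to unpack the hypothesis that $(G/B,D_{0},D_{1})$ is not wild and produce $B_{1}$ directly as the preimage of a suitable subgroup of $G/B$ of order $2$.

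First I would note that ``not wild'' means there is an involution $\bar a\in G/B$ whose $D_{0}$-conjugacy class is $D_{1}$-invariant, i.e.\ there is $a\in G\setminus B$ with $a^{2}\in B$ such that for every $\psi\in D_{1}$ there exists $\alpha\in D_{0}$ with $\psi(\bar a)=\alpha(\bar a)$. Equivalently, $\psi(a)\in\alpha(a)B$ for some $\alpha\in D_{0}$, and this is exactly the statement that the stabilizer in $D_{1}$ of the $D_{0}$-class of $\bar a$ equals $D_{1}$; which in turn is the statement $C_{D_{1}}(\bar a)D_{0}=D_{1}$ read at the level of classes.

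Next I would define $B_{1}=\langle B,a\rangle$. Since $B\trianglelefteq G$ (which follows from $B$ being $D_{1}$-invariant, because $D_{0}\subseteq D_{1}$ is ordinary and hence all inner automorphisms lie in $D_{1}$), one has $B_{1}=B\cup aB$, so $[B_{1}:B]=2$ because $a\notin B$ while $a^{2}\in B$.

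Finally, to verify $N_{D_{1}}(B_{1})D_{0}=D_{1}$, I would pick an arbitrary $\psi\in D_{1}$, use the previous step to find $\alpha\in D_{0}$ with $\psi(a)=\alpha(a)b$ for some $b\in B$, and set $\theta=\alpha^{-1}\psi$. Then $\theta(B)=B$ (because both $\psi$ and $\alpha$ preserve the $D_{1}$-invariant subgroup $B$), and $\theta(a)=ab\in aB\subseteq B_{1}$, so $\theta(B_{1})=\langle\theta(B),\theta(a)\rangle=\langle B,ab\rangle=B_{1}$, i.e.\ $\theta\in N_{D_{1}}(B_{1})$. Hence $\psi=\alpha\theta\in D_{0}\,N_{D_{1}}(B_{1})=N_{D_{1}}(B_{1})D_{0}$, using normality of $D_{0}$ in $D_{1}$.

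There is no serious obstacle here: the statement is essentially a direct translation of ``$D_{0}$-class invariance in $G/B$'' into ``subgroup invariance in $G$,'' and the only small bookkeeping point is to be sure the chosen lift $a$ yields a genuine subgroup $B_{1}$ of index $2$ (this is where $a^{2}\in B$ and $B\trianglelefteq G$ are used) and that conjugation by $\alpha^{-1}\psi$ actually permutes $B_{1}$ rather than merely sending $a$ into $B_{1}$.
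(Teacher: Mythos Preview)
Your proof is correct and is exactly the natural unpacking of what the paper records as ``Immediate.'' One inconsequential slip: $\theta(a)=\alpha^{-1}(\alpha(a)b)=a\cdot\alpha^{-1}(b)$ rather than $ab$, but since $\alpha^{-1}(b)\in B$ the conclusion $\theta(a)\in aB\subseteq B_{1}$ is unaffected.
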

\begin{proof}
Immediate.
\end{proof}
\begin{prop}
Let $(G,D_{0},D_{1})$ be ordinary. Assume $\mid G\mid=2k$ where
$k$ is odd. Then $(G,D_{0},D_{1})$ is not wild.
\end{prop}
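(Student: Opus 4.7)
The plan is to exploit the standard fact that a group of order $2k$ with $k$ odd has, via the regular representation, a characteristic subgroup of index $2$. First I would invoke Cayley's theorem: embedding $G$ into $\mathrm{Sym}(G)$ by left multiplication, any involution $a \in G$ acts as a fixed-point-free permutation of order $2$ on the $2k$-element set $G$, hence as a product of $k$ disjoint transpositions. Since $k$ is odd, this is an odd permutation, so the sign homomorphism composed with the regular representation is a surjection $G \twoheadrightarrow \{\pm 1\}$. Let $H$ be its kernel; then $[G:H]=2$, $H$ consists exactly of the elements of odd order in $G$, and $H$ is characteristic in $G$.

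Next I would observe two consequences. First, by Schur--Zassenhaus (or a direct Sylow-style argument, since $\langle a\rangle$ is a Hall $2$-subgroup of $G$), any two involutions of $G$ are conjugate in $G$; equivalently, the involutions form a single $G$-conjugacy class, hence (since $D_{0}$ is ordinary and therefore contains the inner automorphisms) a single $D_{0}$-conjugacy class. Second, since $H$ is characteristic in $G$, it is $D_{1}$-invariant, and every $\psi \in D_{1}$ preserves the coset $G\setminus H$, which is precisely the set of involutions of $G$ together with no other elements (as all elements of even order in $G$ have order exactly $2$).

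Putting these together: fix any involution $a \in G$. For every $\psi \in D_{1}$, the element $\psi(a)$ is again an involution, hence lies in the $D_{0}$-conjugacy class of $a$. Thus there exists $\alpha \in D_{0}$ with $\alpha(a) = \psi(a)$, so $\alpha^{-1}\psi \in C_{D_{1}}(a)$ and $\psi \in D_{0}\,C_{D_{1}}(a) = C_{D_{1}}(a)D_{0}$ (using $D_{0} \trianglelefteq D_{1}$). Therefore $C_{D_{1}}(a)D_{0} = D_{1}$, which means $(G,D_{0},D_{1})$ is not wild.

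I do not anticipate a serious obstacle: the only non-formal step is the observation that the regular representation produces a subgroup of index $2$, which is standard. The rest is a matter of correctly tracking the relation between $G$-conjugacy, $D_{0}$-conjugacy, and $D_{1}$-action, and using that $D_{0}$ contains the inner automorphisms precisely to identify $G$-conjugacy classes of involutions with $D_{0}$-conjugacy classes.
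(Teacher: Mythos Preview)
Your proof is correct and takes the same approach as the paper: Sylow's theorem gives a unique $G$-conjugacy class of involutions, which is then a single $D_{0}$-orbit necessarily preserved by $D_{1}$. Note, however, that your detour through the regular representation and the characteristic index-$2$ subgroup $H$ is superfluous---that $\psi(a)$ is again an involution for $\psi\in D_{1}$ is immediate from the fact that automorphisms preserve element order, and the paper's proof accordingly fits in two lines.
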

\begin{proof}
From Sylow's theorem, $G$ has a unique conjugacy class of involutions.
Taking some involution $a\in G$ (there exists such), every $D_{1}$-conjugate
of $a$ is a $G$-conjugate of $a$ and so it's also a $D_{0}$-conjugate
of $a$.
\end{proof}
\begin{prop}
Assume $(G,D_{0},D_{1})$ is wild. Assume $B\leq G$ is $D_{1}$-invariant
where $\mid B\mid$ is odd. Then $(G/B,D_{0},D_{1})$ is wild.
\end{prop}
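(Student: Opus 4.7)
The plan is to argue by contradiction, combining the three preceding propositions in a straightforward chain. Suppose $(G/B,D_0,D_1)$ is \emph{not} wild. First I would apply Proposition 2 to produce a subgroup $B<B_1\le G$ with $[B_1:B]=2$ and $N_{D_1}(B_1)D_0=D_1$; that is, every $D_1$-conjugate of $B_1$ is already a $D_0$-conjugate, since the obstructing $D_0$-class of involutions in $G/B$ lifts to an index-two overgroup of $B$ whose $D_1$-orbit collapses modulo $D_0$.

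Next I would feed this $B_1$ into Proposition 1 to conclude that the restricted triplet $(B_1,N_{D_0}(B_1),N_{D_1}(B_1))$ is itself wild. At this point I would look at the order of $B_1$: since $|B|$ is odd and $[B_1:B]=2$, we have $|B_1|=2k$ with $k$ odd. Proposition 3 then says that any ordinary triplet on a group of such order fails to be wild, which contradicts the previous step. Hence $(G/B,D_0,D_1)$ must have been wild to begin with.

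I do not expect a genuine obstacle here; the statement is essentially a bookkeeping consequence of the three preceding propositions, and the only thing to verify is that the hypotheses line up (in particular that $B$ being $D_1$-invariant is exactly what lets Proposition 2 be applied to the quotient, and that oddness of $|B|$ is what forces $|B_1|=2k$ with $k$ odd so Proposition 3 applies). No computation or structural analysis of $G$ itself is needed.
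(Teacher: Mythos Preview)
Your proposal is correct and follows exactly the same route as the paper: assume failure, invoke Proposition~2 to get $B<B_1$ with $[B_1:B]=2$ and $N_{D_1}(B_1)D_0=D_1$, invoke Proposition~1 to see $(B_1,N_{D_0}(B_1),N_{D_1}(B_1))$ is wild, and then contradict Proposition~3 since $|B_1|=2k$ with $k$ odd. The paper's proof is just a terser version of precisely this chain.
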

\begin{proof}
Assume otherwise. Thus there exists some $B<B_{1}\leq G$ such that
$[B_{1}:B]=2$ and $N_{D_{1}}(B_{1})D_{0}=D_{1}$. Now $(B_{1},N_{D_{0}}(B_{1}),N_{D_{1}}(B_{1}))$
is wild, but $\mid B_{1}\mid=2k$ where $k$ is odd - contradiction.
\end{proof}
\begin{lem}
Let $(M,D_{0},D_{1})$ be ordinary. Assume that $D_{1}/D_{0}$ has
a normal $2$-complement and that $M$ is a $2$-group. Assume some
$B\trianglelefteq M$ with $[M:B]=2$ is $D_{1}$-invariant. Then
$($$M$, $D_{0}$, $D_{1}$$)$ is not wild.
\end{lem}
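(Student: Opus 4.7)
My plan is to prove the lemma by induction on $|M|$, with trivial base case $|M|=2$ (a unique involution, fixed by every automorphism of $M$). For the inductive step, consider $V:=\Omega_{1}(Z(M))$, a nonzero elementary abelian 2-group that is characteristic in $M$ (hence $D_{1}$-invariant), since $M$ is a nontrivial 2-group.

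First I look for an easy invariant involution. Suppose some minimal $D_{1}$-invariant subgroup $W\le V$ is not contained in $B$. Then $W\cap B$ is a proper $D_{1}$-submodule of $W$, so $W\cap B=1$ by minimality, and the injection $W\hookrightarrow M/B\cong\mathbb{Z}/2$ forces $|W|=2$. Thus $W=\{1,a\}$ with $a$ a $D_{1}$-fixed central involution, and $C_{D_{1}}(a)D_{0}=D_{1}$ is immediate. Otherwise every minimal $D_{1}$-submodule of $V$ lies in $B$; pick such a $W$, so $(M/W,D_{0},D_{1})$ satisfies the hypotheses with $M/W$ a strictly smaller 2-group.

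By induction there is an involution $\bar{a}\in M/W$ with $C_{D_{1}}(\bar{a})D_{0}=D_{1}$. Since $W\le Z(M)$ is elementary abelian, all lifts of any involution of $M/W$ share a common square in $W$, yielding an $\text{Aut}(M)$-equivariant map $q$ from involutions of $M/W$ to $W$. If $q(\bar{a})\neq1$, then $q(\bar{a})$ is a central involution of $M$, and the equivariance of $q$ combined with the $D_{1}$-invariance of $D_{0}\bar{a}$ shows that the $D_{0}$-orbit of $q(\bar{a})$ is $D_{1}$-invariant, so $a:=q(\bar{a})$ works.

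The main obstacle is the subcase $q(\bar{a})=1$, where every lift of $\bar{a}$ is an involution but we must choose the correct one. Writing $S_{i}:=\mathrm{Stab}_{D_{i}}(\bar{a})$, we have $S_{0}\trianglelefteq S_{1}$ with $S_{1}/S_{0}\cong D_{1}/D_{0}$ retaining a normal 2-complement, and the action of $S_{1}$ on $\pi^{-1}(\bar{a})\cong W$ is affine, described by a 1-cocycle $\phi:S_{1}\to W$. The condition that the lift $\tilde{a}\cdot w$ has $D_{0}$-orbit equal to its $D_{1}$-orbit translates into the cohomological requirement that $\phi$, after being shifted by the coboundary corresponding to $w$, vanish on a transversal of $S_{0}$ in $S_{1}$. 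Producing such a $w$ appears to require combining a coprime-action decomposition of $W$ under the odd part of $S_{1}/S_{0}$ with the standard fact that a 2-group acting on a nontrivial $\mathbb{F}_{2}$-space has a nonzero fixed vector; this delicate step is where I expect the argument to be most subtle.
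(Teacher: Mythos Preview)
Your inductive framework and the first reductions are sound: the case where some minimal $D_{1}$-invariant $W\le V$ escapes $B$ is handled correctly, the square map $q$ is well defined and $D_{1}$-equivariant, and the subcase $q(\bar a)\neq 1$ goes through as you say. The problem is the remaining subcase $q(\bar a)=1$, which you yourself flag as ``most subtle'' but do not actually prove. Reducing to the search for a lift $a$ whose $S_{0}$-orbit is $S_{1}$-stable and rephrasing this as a condition on the $1$-cocycle $\phi:S_{1}\to W$ is accurate, but what you then offer (``appears to require combining a coprime-action decomposition of $W$ \dots\ with the standard fact that a $2$-group acting on a nontrivial $\mathbb{F}_{2}$-space has a nonzero fixed vector'') is a wish, not an argument. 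Two concrete obstructions: first, the action of $S_{1}$ on $\pi^{-1}(\bar a)$ is \emph{affine}, and the fixed-vector fact you cite is about \emph{linear} actions (a $2$-group acting affinely on $\mathbb{F}_{2}^{n}$ need not fix any point, as the translation subgroup already shows); second, the restriction $\phi|_{S_{0}}$ need not be a coboundary, so you cannot simply linearise over $S_{0}$ and then average over the odd part of $S_{1}/S_{0}$. Nothing in your setup forces the situation into a range where these tools bite.

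The paper's proof is organised quite differently and earns exactly the structural control your outline lacks. Working with a minimal counterexample, it first shows that no proper nontrivial $D_{1}$-invariant subgroup of $B$ exists, which forces $\Omega_{1}(Z(M)\cap B)=B$ and hence $M$ elementary abelian; it then strengthens this to rule out any $0<J<B$ with $N_{D_{1}}(J)D_{0}=D_{1}$. After replacing $D_{0}$ by $C_{D_{1}}(M)D_{0}$ one may take $C_{D_{1}}(M)=0$, and a short argument using a fixed point of a Sylow $p$-subgroup of $D_{0}$ in $M\setminus B$ shows $D_{0}$ is itself a $2$-group, hence centralises $B$. Only now, with $M$ elementary abelian and $D_{0}$ a $2$-group, does the normal $2$-complement hypothesis get used: Schur--Zassenhaus produces odd-order complements $R$ inside the preimage of the $2$-complement, Maschke gives each $R$ a fixed point in $M\setminus B$, and the earlier ``no $J$'' claim forces each $R$ to fix a \emph{unique} $D_{0}$-orbit there; conjugacy of the complements then shows this orbit is independent of $R$ and hence $D_{1}$-invariant. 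These reductions---elementary abelian $M$, $2$-group $D_{0}$, uniqueness of the fixed orbit---are precisely what is missing from your cohomological endgame, and without them the $q(\bar a)=1$ case remains open.
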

\begin{proof}
Let $(B,$$M$, $D_{0}$, $D_{1}$$)$ be a counter example with minimal
$\mid M\mid$. 

\textsl{Claim}: No $0<J<B$ is $D_{1}$-invariant.

\textsl{Proof}: Assume otherwise. Let $0<J<B$ be $D_{1}$ invariant.
Passing to $(B/J,M/J,D_{0},D_{1})$ and using the induction hypothesis,
we see that $(M/J,D_{0},D_{1})$ is not wild. Thus there exists some
$J<J_{1}\leq M$ such that $[J_{1}:J]=2$ and $N_{D_{1}}(J_{1})D_{0}=D_{1}$.
Now $(J_{1},N_{D_{0}}(J_{1}),N_{D_{1}}(J_{1}))$ is wild and so, as
$N_{D_{1}}(J_{1})/N_{D_{0}}(J_{1})\cong D_{1}/D_{0}$, we get that
$(J,J_{1},N_{D_{0}}(J_{1}),N_{D_{1}}(J_{1}))$ is also a counter example.
This contradicts the minimality of $\mid M\mid.$ 

$\boxempty$ of the claim.

It follows that $\Omega_{1}(Z(M)\cap B)=B$. Thus (as $B\leq Z(M)$
and $[M:B]=2$) $M$ is abelian. $M$ is elementary abelian as otherwise
we'd have that \rotatebox{90}{\rotatebox{90}{$\Omega$}}$^{1}(M)$
is of order $2$ (Indeed, define a homomorphism $f:M\rightarrow M$
by $f(x)=x^{2}$. Now $B\leq Ker(f)<M$ so $B=Ker(f)$. Thus \rotatebox{90}{\rotatebox{90}{$\Omega$}}$^{1}(M)=Im(f)$
is of order $[M:B]=2$) and obviously fixed by $D_{1}$ - contradicting
the wildness of $(M,D_{0},D_{1})$. Note that as $M$ is abelian,
every group acting on $M$ by automorphims is ordinary.

\textsl{Claim}: No $0<J<B$ satisfies $N_{D_{1}}(J)D_{0}=D_{1}$. 

\textsl{Proof}: Assume otherwise. Let $0<J<B$ satisfy $N_{D_{1}}(J)D_{0}=D_{1}$.
Set $F_{0}=N_{D_{0}}(J)$, $F_{1}=N_{D_{1}}(J)$. Consider $(B,M,F_{0},F_{1})$.
$0<J<B$ is $F_{1}$-invariant. Thus, by the preceding claim applied
to $(B,M,F_{0},F_{1})$, we see that $(B,M,F_{0},F_{1})$ is not a
counter example. As $F_{1}/F_{0}\cong D_{1}/D_{0}$ we get that $(M,F_{0},F_{1})$
is not wild. Now let $a\in M$ be some involution such that every
$F_{1}$-conjugate of $a$ is an $F_{0}$-conjugate of $a$. $(M,D_{0},D_{1})$
is wild and so there is some $\psi\in D_{1}$ for which $\psi(a)$
is not a $D_{0}$-conjugate of $a$. Take $\alpha\in D_{0}$ such
that $\alpha(J)=\psi(J)$. Now $\alpha^{-1}\psi(J)=J$ and so $\alpha^{-1}\psi\in F_{1}$.
Now for some $\beta\in F_{0}$ $\alpha^{-1}\psi(a)=\beta(a)$. Thus
$\psi(a)=\alpha\beta(a)$ but $\alpha\beta\in D_{0}$ - contradiction. 

$\boxempty$ of the claim.

Notice that as $(M,D_{0},D_{1})$ is wild, $(M,C_{D_{1}}(M)D_{0},D_{1})$
is wild. Also, as $D_{1}/D_{0}$ has a normal $2$-complement, $D_{1}/C_{D_{1}}(M)D_{0}$
has a normal $2$-complement, so we may assume that $C_{D_{1}}(M)\leq D_{0}$.
Thus we may in fact assume that $C_{D_{1}}(M)=0$. 

\textsl{Claim}: $D_{0}$ is a $2$-group. 

\textsl{Proof}: Assume otherwise. Let $p\in\pi(D_{0})$ be odd. Take
$P\in Syl_{p}(D_{0})$. $P$ acts on the set $M\setminus B$, in which
it must have a fixed point. Set $J=C_{B}(P)$. It is clear that $N_{D_{1}}(J)D_{0}=D_{1}$.
Now if $J>0$ then $J=B$, but $C_{M}(P)\nleq B$, so $P\leq C_{D_{1}}(M)$
- a contradiction. Thus $J=0$. Now setting $C_{M}(P)=<a>$, we see
that every $D_{1}$-conjugate of $a$ is a $D_{0}$-conjugate of $a$
- a contradiction. 

$\boxempty$ of the claim.

Now $C_{B}(D_{0})>0$ and thus $C_{B}(D_{0})=B$. Note that $D_{1}$
is not a $2$-group (otherwise it would have a fixed point in $M$).
Let $k$ be the size of the normal $2$-complement of $D_{1}/D_{0}$.
Set $\mathcal{F=}\{R\leq D_{1}\mid\,\,\mid R\mid=k\}$. From Schur-Zassenhaus,
$\mathcal{F\neq\emptyset}$ and $D_{0}$ acts transitively on $\mathcal{F}$
by conjugation ($D_{0}$ is possibly trivial). Let $D_{0}<H\leq D_{1}$
be the group for which $H/D_{0}$ is the normal $2$-complement of
$D_{1}/D_{0}$ (so for every $R\in\mathcal{F}$, $D_{0}R=H$). Set
$E=\{C\mid C$ is a $D_{0}$-orbit in $M\setminus B$\}. Let $R\in\mathcal{F}$
be arbitrary. Set $E_{R}$ = \{$C\in E$$\mid$$C$ is $R$-invariant\}.

\textsl{Claim}: Let $C\in E$. Then $C\in E_{R}$ iff some $a\in C$
is fixed by $R$. 

\textsl{Proof}: The ``if'' part is obvious. For the ``only if'':
Assume $C$ is $R$-invariant. Take $b\in C$. Then $C_{\ensuremath{H}}(b)D_{0}=H$
so $C_{H}(b)/C_{D_{0}}(b)\cong H/D_{0}\cong R.$ Now there exists
some $V\leq C_{H}(b)$ with $V\in\mathcal{F}$. For some $t\in D_{0}$,
$R$ $=$ $^{t}V$. Thus $R$ fixes $^{t}b\in C$ and we are done. 

$\boxempty$ of the claim.

\textsl{Claim}: $\mid E_{R}\mid=1$. 

\textsl{Proof}: From Maschke's theorem, we see that $R$ must have
a fixed point in the set $M\setminus B$, so $\mid E_{R}\mid\geq1$.
Now assume $\mid E_{R}\mid>1$. From the previous claim, it follows
that $C_{M}(R)$ contains at least $2$ nontrivial elements. Thus
$C_{M}(R)>C_{M}(R)\cap B>0$. As $D_{0}$ acts transitively on $\mathcal{F}$,
we see that $N_{D_{1}}(C_{M}(R)\cap B)D_{0}=D_{1}$. From a previous
claim, it follows that $C_{M}(R)\cap B=B$. But $C_{M}(R)>C_{M}(R)\cap B$,
and thus we get $C_{M}(R)=M$, which contradicts $C_{D_{1}}(M)=0$. 

$\boxempty$ of the claim.

Write $E_{R}=\{C_{R}\}$. Note that for any $R_{1},R_{2}\in\mathcal{F}$,
we have $C_{R_{1}}=C_{R_{2}}$. Indeed, for some $t\in D_{0}$, $R_{1}$=$^{t}R_{2}$
and thus $C_{R_{1}}=C_{^{t}R_{2}}$$=$$^{t}(C_{R_{2}})=C_{R_{2}}$.
Let $C\in E$ be the unique $D_{0}$-orbit for which for every $R\in$$\mathcal{F}$,
$E_{R}=\{C\}$. It is immediate that $C$ is $D_{1}$-invariant -
a contradiction to the wildness of $($$M$, $D_{0}$, $D_{1}$$)$.
The lemma is proved.
\end{proof}
\begin{lem}
Assume that $(G,D_{0},D_{1})$ is wild and that $D_{1}/D_{0}$ has
a normal $2$-complement. Assume $B$ is a solvable $D_{1}$-invariant
subgroup of $G$. Then $(G/B,D_{0},D_{1})$ is wild.
\end{lem}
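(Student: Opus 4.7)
The plan is to induct on $|B|$, reducing modulo a minimal $D_{1}$-invariant subgroup of $B$ at each step and invoking the two already-established pieces of the previous arguments: Proposition 4 for odd-order sections and Lemma 1 for $2$-group sections.

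If $|B|=1$ there is nothing to prove. Otherwise, using the solvability of $B$, one selects a minimal $D_{1}$-invariant subgroup $J\leq B$, and a standard check forces $J$ to be an elementary abelian $p$-group for some prime $p$: the derived subgroup $J'$ is $D_{1}$-invariant and strictly smaller than $J$ (as $J$ is solvable), a Sylow subgroup of $J$ is characteristic hence $D_{1}$-invariant, and finally $J[p]$ is characteristic and nontrivial. If $p$ is odd, Proposition 4 gives that $(G/J,D_{0},D_{1})$ is wild; then $B/J$ is a solvable $D_{1}$-invariant subgroup of $G/J$ of strictly smaller order than $B$, and the inductive hypothesis applied inside $G/J$ yields $(G/B,D_{0},D_{1})\cong((G/J)/(B/J),D_{0},D_{1})$ wild.

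The crux is the case $p=2$, which must appeal to Lemma 1 since $J$ is now a $2$-group. I would argue by contradiction: if $(G/J,D_{0},D_{1})$ is not wild, Proposition 2 produces $J<J_{1}\leq G$ with $[J_{1}:J]=2$ and $N_{D_{1}}(J_{1})D_{0}=D_{1}$, and Proposition 1 then gives that $(J_{1},N_{D_{0}}(J_{1}),N_{D_{1}}(J_{1}))$ is wild with $N_{D_{1}}(J_{1})/N_{D_{0}}(J_{1})\cong D_{1}/D_{0}$. But $J_{1}$ is a $2$-group (of order $2|J|$), $J$ is an $N_{D_{1}}(J_{1})$-invariant normal subgroup of index $2$ in $J_{1}$, and the quotient $N_{D_{1}}(J_{1})/N_{D_{0}}(J_{1})$ inherits a normal $2$-complement from $D_{1}/D_{0}$ via the isomorphism supplied by Proposition 1. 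Hence Lemma 1 forces $(J_{1},N_{D_{0}}(J_{1}),N_{D_{1}}(J_{1}))$ to be not wild --- a contradiction. Thus $(G/J,D_{0},D_{1})$ is wild, and the induction then concludes exactly as in the odd case.

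The main obstacle is really encapsulated in Lemma 1; once that is granted, the present lemma is essentially a matter of bookkeeping. The only subtlety is matching hypotheses across the nested triplets that appear along the way --- in particular, checking that the ``normal $2$-complement'' hypothesis survives into every local triplet at $J_{1}$, which is precisely what Proposition 1 guarantees.
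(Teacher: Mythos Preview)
Your proposal is correct and follows essentially the same route as the paper. The only organizational difference is that the paper first reduces (implicitly, via the same induction along a $D_{1}$-invariant normal series) to the case where $B$ itself is a $p$-group, and then in the $p=2$ case applies Propositions~1--2 and Lemma~1 with the whole of $B$ in the role of your $J$; your version peels off one minimal $D_{1}$-invariant layer at a time, which amounts to the same thing.
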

\begin{proof}
We may assume that $B$ is a $p$-group for some $p$. The case where
$p$ is odd follows from Proposition 4, so assume $p=2$. Now if $(G/B,D_{0},D_{1})$
is not wild, then there exists some $B<M\leq G$ with $[M:B]=2$ and
$N_{D_{1}}(M)D_{0}=D_{1}$. Thus $(M,N_{D_{0}}(M),N_{D_{1}}(M))$
is wild and $N_{D_{1}}(M)/N_{D_{0}}(M)\cong D_{1}/D_{0}$ and, in
particular, $N_{D_{1}}(M)/N_{D_{0}}(M)$ has a normal $2$-complement.
But $M$ is a $2$-group, $[M:B]=2$ and $B$ is $N_{D_{1}}(M)$-invariant
(as it is $D_{1}$-invariant). This contradicts Lemma 1. The lemma
is proved.
\end{proof}
\begin{lem}
Let $A$ be a simple nonabelian group. Then $A$ (and thus, as $A$
is simple nonabelian, $A^{r}$ for any $r\geq1$) possesses a characteristic
conjugacy class of involutions.
\end{lem}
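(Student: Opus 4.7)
The plan is to reduce the lemma to a direct application of Corollary 1, using the Classification of Finite Simple Groups (CFSG) to verify the hypothesis of that corollary.

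I first observe that the ``$A^r$'' assertion follows formally from the $r=1$ case: if $C\subseteq A$ is a characteristic $A$-conjugacy class of involutions, then $C^r\subseteq A^r$ consists entirely of involutions, forms a single $A^r$-conjugacy class (since all coordinates are $A$-conjugate, $A^r$ acts coordinatewise-transitively on $C^r$), and is visibly preserved by $\mathrm{Aut}(A^r)\cong \mathrm{Aut}(A)\wr S_r$ (which permutes coordinates and acts by $\mathrm{Aut}(A)$ on each). Hence $C^r$ is characteristic in $A^r$.

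For the $r=1$ case, I plan to apply Corollary 1 to $G=\mathrm{Aut}(A)$ with $N=\mathrm{Inn}(A)\cong A$. The group $G$ is nonsolvable since it contains the nonabelian simple $A$; and by the Feit--Thompson theorem $|A|$ is even, so $A$ contains involutions. \textit{Provided} $\mathrm{Out}(A)=G/N$ has a normal $2$-complement, Corollary 1 hands us an involution $a\in A$ with $C_{\mathrm{Aut}(A)}(a)\cdot \mathrm{Inn}(A)=\mathrm{Aut}(A)$. Unpacking: any $\phi\in\mathrm{Aut}(A)$ decomposes as $\phi=\theta\iota_g$ with $\theta(a)=a$ and $\iota_g$ an inner automorphism, so $\phi(a)=gag^{-1}$; hence the $A$-conjugacy class of $a$ is $\mathrm{Aut}(A)$-invariant, i.e.\ characteristic.

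The missing ingredient -- that $\mathrm{Out}(A)$ has a normal $2$-complement for every nonabelian simple $A$ -- is where I would invoke CFSG. It is immediate for alternating groups and sporadic groups, in which $\mathrm{Out}(A)$ is a $2$-group (of order $1$, $2$, or $4$, the last occurring only for $A_6$). For groups of Lie type I would use the standard decomposition $\mathrm{Out}(A)=\mathrm{Diag}\cdot\mathrm{Field}\cdot\mathrm{Graph}$ with $\mathrm{Diag}$ and $\mathrm{Field}$ cyclic and $|\mathrm{Graph}|\in\{1,2,6\}$, and check by direct inspection of the actions of field, diagonal and graph automorphisms on one another that the odd-order Hall subgroup assembles into a normal subgroup (its two obvious pieces, $\mathrm{Diag}_{2'}$ and $\mathrm{Field}_{2'}$, are characteristic in the pieces in which they sit, and are respected by the graph action; triality in type $D_4$ contributes only the normal $\mathbb{Z}/3\trianglelefteq S_3$). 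This last step, which is the main technical obstacle, is really a case-by-case bookkeeping exercise once the structure of $\mathrm{Out}(A)$ is written down.
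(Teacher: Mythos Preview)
Your argument has two independent fatal gaps.

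\textbf{Circularity.} In this paper, Corollary~1 is deduced from Theorem~1, and the proof of Theorem~1 invokes Lemma~3 at its final step (to rule out the characteristically simple case $G\cong A^{r}$). So appealing to Corollary~1 in a proof of Lemma~3 closes a logical circle. You would need an independent proof of Corollary~1 that does not pass through Theorem~1, and you do not supply one.

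\textbf{The claim on $\mathrm{Out}(A)$ is false.} It is not true that $\mathrm{Out}(A)$ has a normal $2$-complement for every nonabelian simple $A$. Take $A=P\Omega_{8}^{+}(p)$ for an odd prime $p$: here $\mathrm{Outdiag}\cong(\mathbb{Z}/2)^{2}$, there are no field automorphisms, and $\mathrm{Graph}\cong S_{3}$ acts on $\mathrm{Outdiag}$ by permuting its three nonzero elements (triality). Hence $\mathrm{Out}(A)\cong(\mathbb{Z}/2)^{2}\rtimes S_{3}\cong S_{4}$, which has no normal subgroup of order~$3$ and therefore no normal $2$-complement. Your sketch goes wrong exactly here: you note that $\mathbb{Z}/3\trianglelefteq S_{3}$, but once the diagonal $(\mathbb{Z}/2)^{2}$ is adjoined this $\mathbb{Z}/3$ is no longer normal in the full outer automorphism group.

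For comparison, the paper does not attempt a self-contained argument at all: it simply cites Lemma~12.1 of Fried--Guralnick--Saxl, \emph{Schur covers and Carlitz's conjecture} (reference~[1]), where the existence of a characteristic class of involutions in a nonabelian simple group is established by a case analysis through CFSG that does not proceed via the normal-$2$-complement property of $\mathrm{Out}(A)$. Your reduction of the $A^{r}$ case to $r=1$ is fine and matches the parenthetical remark in the statement.
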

\begin{proof}
See {[}1{]} (Lemma 12.1).
\end{proof}
Theorem 1 now follows easily:
\begin{thm}
Assume that $(G,D_{0},D_{1})$ is wild and that $D_{1}/D_{0}$ has
a normal 2-complement. Then $G$ is solvable and if $B$ is a $D_{1}$-invariant
subgroup of $G$, then $(G/B,D_{0},D_{1})$ is wild.
\end{thm}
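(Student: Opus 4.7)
The second assertion of the theorem follows immediately from Lemma 2 once $G$ is known to be solvable, since then every $D_{1}$-invariant subgroup of $G$ is solvable. So the plan is to establish solvability of $G$ by induction on $\mid G\mid$: I let $(G,D_{0},D_{1})$ be a counterexample of minimal order and try to derive a contradiction by producing an involution $a\in G$ with $C_{D_{1}}(a)D_{0}=D_{1}$.

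First I would strip off the solvable radical. The solvable radical $R$ of $G$ is characteristic, hence $D_{1}$-invariant, and solvable, so Lemma 2 applies and $(G/R,D_{0},D_{1})$ is wild. Since $R$ is solvable, $G/R$ is nonsolvable, so if $R>0$ then $G/R$ is a smaller counterexample, violating minimality. Hence $R=0$, and the socle $S$ of $G$ is a nontrivial direct product of nonabelian simple groups. Grouping by isomorphism type, I write $S=T_{1}\times\cdots\times T_{k}$ with $T_{i}\cong A_{i}^{r_{i}}$ and the $A_{i}$ pairwise non-isomorphic nonabelian simple groups.

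The key point is that each $T_{i}$ is $D_{1}$-invariant: any automorphism of $G$ preserves $S$ and permutes the isotypic components $T_{i}$, but these components are pairwise non-isomorphic as abstract groups, so each must be fixed. By Lemma 3, each $T_{i}$ carries a characteristic conjugacy class $C_{i}$ of involutions. Picking $a_{i}\in C_{i}$ and setting $a=(a_{1},\ldots,a_{k})\in S$, for any $\psi\in D_{1}$ the restriction of $\psi$ to $T_{i}$ is an automorphism of $T_{i}$ that fixes $C_{i}$ setwise, so $\psi(a_{i})=t_{i}a_{i}t_{i}^{-1}$ for some $t_{i}\in T_{i}$. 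Then $\psi(a)=tat^{-1}$ with $t=(t_{1},\ldots,t_{k})\in S\leq G$, and since $D_{0}$ contains all inner automorphisms of $G$, $\psi(a)$ is a $D_{0}$-conjugate of $a$. This yields $C_{D_{1}}(a)D_{0}=D_{1}$, the required contradiction. The main subtlety I expect is justifying the $D_{1}$-invariance of the individual $T_{i}$; this reduces to the fact that the multiset of isomorphism types of simple composition factors of $T_{i}$ determines $T_{i}$ as an abstract group, making the isotypic decomposition of $S$ canonical.
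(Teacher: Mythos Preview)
Your proof is correct and uses the same two key ingredients as the paper, namely Lemma 2 (to pass to quotients by solvable $D_{1}$-invariant subgroups) and Lemma 3 (characteristic classes of involutions in powers of nonabelian simple groups), inside a minimal-counterexample argument.

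The one difference worth pointing out is in the reduction step. You kill only the solvable radical, obtain $R=0$, and then work with the socle $S\leq G$ and its isotypic decomposition $S=T_{1}\times\cdots\times T_{k}$, applying Lemma 3 to each factor and assembling an involution $a=(a_{1},\ldots,a_{k})$. The paper's argument is shorter: minimality is used to show that $G$ has \emph{no} proper nontrivial characteristic subgroup whatsoever (if $H$ were one, then $H$ is solvable by minimality, so $(G/H,D_{0},D_{1})$ is wild by Lemma 2, hence $G/H$ is solvable, hence $G$ is solvable). This forces $G$ itself to be characteristically simple, i.e.\ $G\cong A^{r}$, and a single application of Lemma 3 finishes. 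Your route works fine but does a little more than necessary; the paper exploits minimality more aggressively to avoid the socle and isotypic bookkeeping altogether.
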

\begin{proof}
It suffices to show that $G$ is solvable. The rest follows from Lemma
2. Let $(G,D_{0},D_{1})$ be a counter example with minimal $\mid G\mid$.
If $G$ possesses a proper nontrivial characteristic subgroup $H$,
then $H$ is solvable and thus $(G/H,D_{0},D_{1})$ is wild and hence
$G/H$ is solvable. Thus $G$ possesses no proper nontrivial characteristic
subgroup. Thus $G\cong A^{r}$ for a simple nonabelian group $A$
and $r\geq1$. This contradicts Lemma 3.
\end{proof}

\section{saksonov's problem}

The purpose of this section is to provide a negative answer to Saksonov's
problem, or more specifically to prove the following theorem:
\begin{thm}
Let $A$ be a finite group. Then there exists a finite group $G=H\rtimes A$
($H$ solvable) such that $G$ possesses no characteristic conjugacy
class of nontrivial cyclic subgroups.
\end{thm}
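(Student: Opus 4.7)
The plan is to construct $G=H\rtimes A$ explicitly and verify the conclusion by case analysis. The central difficulty is the behavior of $\langle a\rangle$ for $a\in A$: for $a$ of order $n$ with $\gcd(n,|H|)=1$, Schur--Zassenhaus forces every complement of $H$ in $G$ (and hence every image of $A$ under an automorphism of $G$) to be $G$-conjugate to $A$, so $\langle a\rangle$'s $G$-conjugacy class is characteristic. Therefore $H$ must have elements of every prime order dividing $|A|$, so that $G$ possesses non-$G$-conjugate complements of $H$ which can then be permuted by outer automorphisms of $G$.

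Accordingly, I propose the following construction. Let $p_{1},\dots,p_{s}$ enumerate the primes dividing $|A|$, and pick one additional prime $q$ coprime to $|A|$. For each $p\in\{p_{1},\dots,p_{s},q\}$ choose a faithful $\mathbb{F}_{p}[A]$-module $V_{p}$ (a suitable summand of the regular representation). Set $H=\bigoplus_{p}V_{p}^{m}$ for an integer $m\ge 2$, with $A$ acting diagonally on each summand. The group $H$ is a direct product of elementary abelian groups, hence solvable, so $G=H\rtimes A$ has the required form. The construction is $\mathrm{Aut}(A)$-symmetric in the sense that (for appropriate choice of the $V_{p}$'s) every $\alpha\in\mathrm{Aut}(A)$ lifts to an automorphism of $G$; in addition, for each summand $V^{m}$ the centralizer $GL_{m}(\mathrm{End}_{A}(V))$ extends to an automorphism of $G$ fixing $A$ pointwise.

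It remains to check, for every nontrivial cyclic subgroup $\langle g\rangle\le G$, that some automorphism of $G$ moves $\langle g\rangle$ out of its $G$-conjugacy class. If $g\in H$, the $GL_{m}$-subgroup produces many more orbits on cyclic subgroups of $H$ than $A$ does (once $m\ge 2$), so $\langle g\rangle$ is not characteristic. If $g=(h,a)$ with $a\ne 1$, first try to lift an $\alpha\in\mathrm{Aut}(A)$ that moves the $A$-conjugacy class of $\langle a\rangle$; if no such $\alpha$ exists (the $\mathrm{Aut}(A)$-rigid case), instead apply a $GL$-automorphism on the $p$-summand of $H$ for a prime $p\mid|a|$, exploiting the multiple $G$-conjugacy classes of complements of the Hall $p'$-subgroup of $G$ to translate $\langle g\rangle$ into a non-$G$-conjugate one. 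The main obstacle will be this last subcase: one must verify that the required shift never lies in $(1-a)H$, which is exactly where the multiplicity $m\ge 2$ on each $p$-summand and the presence of the auxiliary prime $q$ become essential.
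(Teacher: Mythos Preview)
Your plan has a genuine gap at precisely the point you flag as the ``main obstacle,'' and the tool you name there will not do the job. An automorphism in $GL_m(\mathrm{End}_A(V_p))$ (or in $GL_m(\mathbb{F}_p)$ acting on the multiplicity space) fixes the complement $A$ pointwise, so it sends $\langle a\rangle$ to $\langle a\rangle$ for every $a\in A$. Thus it cannot move $\langle a\rangle$ out of its $G$-conjugacy class, no matter how many conjugacy classes of complements exist. What actually moves $\langle a\rangle$ is an automorphism of cocycle type, $a\mapsto z(a)\,a$ for some $z\in Z^{1}(A,H)$; this moves $\langle a\rangle$ out of its class precisely when $z(a)\notin (1-a)H$, i.e.\ when the restriction map $H^{1}(A,H)\to H^{1}(\langle a\rangle,H)$ is nonzero. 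Nothing in your construction guarantees this: ``a suitable summand of the regular representation'' is too vague, the multiplicity $m$ is irrelevant to whether that restriction map vanishes, and the auxiliary prime $q$ contributes nothing since $H^{1}(A,V_q)=0$ when $q\nmid|A|$. (Incidentally, for $A$ nonsolvable $G$ need not have a Hall $p'$-subgroup at all, so that phrase is also problematic.)

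The paper avoids all of this by an iterative construction rather than a single-shot one. For each prime $p$ it passes from $A$ to $G_p(A)=B\rtimes A$, where $B$ is $r$ copies of the augmentation ideal of $\mathbb{F}_p[A]$ (basis $v_g^i$, $g\in A^{\#}$, with $(v_{g_1}^i)^{g_2}=v_{g_1g_2}^i-v_{g_2}^i$) and $r$ is a prime not dividing $|A|(p-1)$. The natural cocycle $z(g)=v_g^i$ gives automorphisms $\psi_i$ with $\psi_i(g)=g\,v_g^i$, and a direct computation shows $v_g^1\notin (1-g)B$ for every $g\in A$ of order $p$; this is exactly the fact you left unproved. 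The case $x\in B$ is handled not by a $GL_m$-count but by two further explicit automorphisms: a cyclic permutation $\psi$ of the $r$ blocks and a shear $\phi$, whose interaction forces any invariant $\langle x\rangle\le B$ to be zero (this is where $r\nmid |A|(p-1)$ is used). Finally, surjectivity of $N_{\mathrm{Aut}(G_p(A))}(B)\to\mathrm{Aut}(A)$ lets one propagate $\langle q\rangle$-wildness for $q\neq p$ from $A$ to $G_p(A)$, so iterating over the primes of $|A|$ finishes. Your one-step scheme could perhaps be made to work, but only after you specify the module, exhibit the cocycles, and prove the cohomological nonvanishing---none of which your $GL_m$ idea supplies.
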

In order to prove Theorem 2, we need some definitions. A group $G$
is called $<p>$-wild (for a prime $p$) iff there exists no $x\in G$
of order $p$ such that $<x>\preceq G$ (where ``$S\preceq G$''
means that the conjugacy class $\{S^{g}\mid g\in G\}$ is characteristic,
for a subgroup $S\leq G$). Let $\xi(G)=\{p\in\pi(G)\mid G\text{ is \ensuremath{<p>}-wild\ensuremath{\}.}}$
It is evident that ``$G$ possesses no characteristic conjugacy class
of nontrivial cyclic subgroups'' is equivalent to ``$\pi(G)=\xi(G)$''.
It is also evident that if $H$ is a normal $p'$-subgroup of $G$
such that $G/H$ is $<p>$-wild and the natural map $\sigma:N_{Aut(G)}(H)\rightarrow Aut(G/H)$
is surjective, then $G$ is $<p>$-wild. Our strategy is as follows.
Let $A$ be any nontrivial group and $p$ a prime. We shall construct
a semidirect product $G_{p}(A)=B\rtimes A$ where $B$ is a nontrivial
elementary abelian $p$-group such that $G_{p}(A)$ is $<p>$-wild
and the natural map $\sigma:N_{Aut(G_{p}(A))}(B)\rightarrow Aut(G_{p}(A)/B\cong A)$
is surjective. This would yield that $\xi(G_{p}(A))\supseteq\xi(A)\cup\{p\}$.
Theorem 2 would then follow easily: Let $A$ be a (W.L.O.G nontrivial)
finite group. Write $\pi(A)=\{p_{1},..,p_{n}\}.$ Set $G=G_{p_{1}}(G_{p_{2}}...(G_{p_{n}}(A))...)$.
Now $G$ is of the form $H\rtimes A$ ($H$ solvable) and $\pi(G)=\{p_{1},...,p_{n}\}\subseteq\xi(G)$
so $\xi(G)=\pi(G)$ and $G$ is as needed. The rest of this section
is devoted to the construction of $G_{p}(A)$ ($p$ a fixed prime
and $A$ a fixed nontrivial group). The following easy lemma is useful. 
\begin{lem}
Assume $G=B\rtimes A$ and $\psi\in Aut(B)$ satisfies $\psi(v^{g})=\psi(v)^{g}$
for every $v\in B$ and $g\in A$. Then there exists a unique $\psi'\in Aut(G)$
such that $\forall g\in A\,\psi'(g)=g$ and $\forall v\in B\,\psi'(v)=\psi(v)$.
\end{lem}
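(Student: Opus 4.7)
The plan is to define $\psi'$ by the only formula compatible with the stated constraints and then check that it is indeed an automorphism. Every element of $G$ has a unique expression as $vg$ with $v\in B$ and $g\in A$, so I set $\psi'(vg):=\psi(v)g$. Uniqueness of $\psi'$ is then immediate: any homomorphism that is the identity on $A$ and restricts to $\psi$ on $B$ must send $vg$ to $\psi(v)g$, since $B\cup A$ generates $G$.

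The one substantive step is verifying that $\psi'$ respects products. For $v_1,v_2\in B$ and $g_1,g_2\in A$, I would expand
\[
\psi'(v_1 g_1\cdot v_2 g_2)=\psi'\bigl(v_1\cdot g_1 v_2 g_1^{-1}\cdot g_1 g_2\bigr)=\psi(v_1)\,\psi(g_1 v_2 g_1^{-1})\,g_1 g_2
\]
and compare it with
\[
\psi'(v_1 g_1)\,\psi'(v_2 g_2)=\psi(v_1)\,g_1\,\psi(v_2)\,g_2=\psi(v_1)\,\bigl(g_1\psi(v_2)g_1^{-1}\bigr)\,g_1 g_2.
\]
The two expressions coincide iff $\psi(g_1 v_2 g_1^{-1})=g_1\psi(v_2)g_1^{-1}$, which is exactly the hypothesis $\psi(v^g)=\psi(v)^g$ in the paper's convention $v^g=g^{-1}vg$, applied with $g=g_1^{-1}$. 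This is the only point at which the compatibility hypothesis enters, and it is tailored to make the check succeed.

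To conclude that $\psi'\in\mathrm{Aut}(G)$, I would observe that $\psi^{-1}$ automatically satisfies the same compatibility hypothesis, so the same construction produces a map $(vg)\mapsto\psi^{-1}(v)g$ that is easily seen to be a two-sided inverse of $\psi'$. I do not anticipate a genuine obstacle here: once the defining formula for $\psi'$ is written down, the hypothesis is precisely what reduces the homomorphism check to a one-line calculation, and the remaining verifications are formal.
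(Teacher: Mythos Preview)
Your proof is correct and follows essentially the same route as the paper: define $\psi'(vg)=\psi(v)g$, verify multiplicativity by rewriting $v_{1}g_{1}v_{2}g_{2}=v_{1}(v_{2})^{g_{1}^{-1}}g_{1}g_{2}$ and invoking the hypothesis $\psi(v^{g})=\psi(v)^{g}$, and note that uniqueness is forced. The only cosmetic difference is that the paper dismisses bijectivity as ``clear'' while you justify it by applying the same construction to $\psi^{-1}$.
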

\begin{proof}
Define $\psi':G\rightarrow G$ via $\psi'(vg)=\psi(v)g$ (where $v\in B$
and $g\in A$). $\psi'$ is clearly a well defined bijection. Also
\[
\psi'(v_{1}g_{1}v_{2}g_{2})=\psi'(v_{1}(v_{2})^{(g_{1}^{-1})}g_{1}g_{2})=\psi(v_{1}(v_{2})^{(g_{1}^{-1})})g_{1}g_{2}=\psi(v_{1})\psi((v_{2})^{(g_{1}^{-1})})g_{1}g_{2}
\]
\[
=\psi(v_{1})\psi(v_{2})^{(g_{1}^{-1})}g_{1}g_{2}=\psi(v_{1})g_{1}\psi(v_{2})g_{2}=\psi'(v_{1}g_{1})\psi'(v_{2}g_{2})
\]

and so $\psi'\in Aut(G)$. Uniqueness is obvious.
\end{proof}
We start the construction of $G_{p}(A)$. Let $r$ be the minimal
prime such that $r\nmid\mid A\mid(p-1)$. Let $B=(\mathbb{Z}/p\mathbb{Z})^{r(\mid A\mid-1)}$.
Write $B=B_{1}\oplus...\oplus B_{r}$ where for each $i$, $B_{i}$
is of dimension $\mid A\mid-1$ with a fixed basis $\{v_{g}^{i}\mid g\in A^{\#}\}$.
We also denote $v_{0}^{1}=...=v_{0}^{r}=0$. Throughout the construction,
the letters $g,h$ and their variants (say $h_{s}$) will denote elements
of $A$ while the letters $t,v$ (and their variants) will denote
elements of $B$. The letter $a$ (and its variants) will denote an
element of $\mathbb{Z}/p\mathbb{Z}$. Both additive and multiplicative
notation will be used for elements of $B$. 
\begin{lem*}
There exists a unique action of $A$ on $B$ (by automorphisms) such
that for all $g_{1},g_{2}\in A$ and $i=1,...,r$: $(v_{g_{1}}^{i})^{g_{2}}=v_{g_{1}g_{2}}^{i}-v_{g_{2}}^{i}$.
\end{lem*}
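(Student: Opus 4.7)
The plan is to realize each summand $B_{i}$ explicitly as a known $A$-module so that the stated formula falls out of a one-line computation. Uniqueness is immediate: the $v_{g}^{i}$ (for $g\in A^{\#}$, $i=1,\ldots,r$) form a basis of $B$, and any action of $A$ on $B$ by automorphisms is $\mathbb{F}_{p}$-linear and is therefore determined by its values on these basis vectors.

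For existence I would work one summand at a time, so drop the superscript $i$. Consider the group algebra $\mathbb{F}_{p}[A]$ with standard basis $\{e_{g}:g\in A\}$ and the right regular action $e_{g}\cdot h=e_{gh}$. Its augmentation ideal $\omega=\{\sum a_{g}e_{g}:\sum a_{g}=0\}$ is an $A$-submodule of dimension $|A|-1$ with basis $\{e_{g}-e_{0}:g\in A^{\#}\}$. The $\mathbb{F}_{p}$-linear map $B_{i}\to\omega$ sending $v_{g}\mapsto e_{g}-e_{0}$ is an isomorphism, and pulling the $A$-action back turns $B_{i}$ into an $A$-module satisfying
$$v_{g}\cdot h \;=\; (e_{g}-e_{0})\cdot h \;=\; e_{gh}-e_{h} \;=\; (e_{gh}-e_{0})-(e_{h}-e_{0}) \;=\; v_{gh}-v_{h},$$
which is the required identity. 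Taking the direct sum of these actions over $i=1,\ldots,r$ then yields the desired action of $A$ on $B$.

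There is no real obstacle, only a little bookkeeping: one must check that the identity is consistent with the convention $v_{0}^{i}=0$ in the three boundary cases --- both sides vanish when $g_{1}=0$, both reduce to $v_{g_{1}}^{i}$ when $g_{2}=0$, and both reduce to $-v_{g_{2}}^{i}$ when $g_{2}=g_{1}^{-1}$ --- each of which follows from direct inspection of the rewriting $e_{gh}-e_{h}=(e_{gh}-e_{0})-(e_{h}-e_{0})$. One could equivalently bypass the group algebra altogether by defining $\phi_{h}:B_{i}\to B_{i}$ via $\phi_{h}(v_{g})=v_{gh}-v_{h}$ on basis vectors, extending $\mathbb{F}_{p}$-linearly, and verifying $\phi_{h_{1}h_{2}}=\phi_{h_{2}}\circ\phi_{h_{1}}$ through the telescoping cancellation $(v_{gh_{1}h_{2}}-v_{h_{2}})-(v_{h_{1}h_{2}}-v_{h_{2}})=v_{gh_{1}h_{2}}-v_{h_{1}h_{2}}$, which simultaneously shows each $\phi_{h}$ is invertible and that $h\mapsto\phi_{h}$ is a right $A$-action by automorphisms.
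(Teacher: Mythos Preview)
Your proof is correct. Your primary argument --- identifying each $B_{i}$ with the augmentation ideal of $\mathbb{F}_{p}[A]$ under the right regular action via $v_{g}\mapsto e_{g}-e_{0}$ --- is a genuinely different route from the paper's, which instead takes exactly the bare-hands alternative you sketch at the end: the paper defines $F_{g_{2}}$ on basis vectors by the given formula, observes that $\{v_{g_{1}g_{2}}^{i}-v_{g_{2}}^{i}\}_{g_{1}\in A^{\#},\,i}$ is again a basis of $B$ (so $F_{g_{2}}\in GL(B)$), and then checks $F_{g_{3}}\circ F_{g_{2}}=F_{g_{2}g_{3}}$ by a one-line computation on basis vectors. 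Your augmentation-ideal viewpoint buys you an explanation of \emph{why} such an action exists (the module is a standard one) and makes invertibility and the action axiom automatic; the paper's approach avoids introducing the group algebra but must note the basis fact to get invertibility and must verify the composition identity by hand.
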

\begin{proof}
For each $g_{2}\in A$ the sequence $\{v_{g_{1}g_{2}}^{i}-v_{g_{2}}^{i}\}_{g_{1}\in A^{\#},\,i=1,...,r}$
is a basis for $B$. Thus there exists $F_{g_{2}}\in GL(B)$ such
that for every $g_{1}\in A^{\#}$ and $i=1,...,r$ $F_{g_{2}}(v_{g_{1}}^{i})=v_{g_{1}g_{2}}^{i}-v_{g_{2}}^{i}$.
We also have $F_{g_{2}}(v_{0}^{i})=F_{g_{2}}(0)=0=v_{0g_{2}}^{i}-v_{g_{2}}^{i}$
(for $i=1,...,r)$. Clearly $F_{g_{3}}(F_{g_{2}}(v_{g_{1}}^{i}))=F_{g_{2}g_{3}}(v_{g_{1}}^{i})$
for every $g_{1},g_{2},g_{3}\in A$ and $i=1,...,r$. Thus $F_{g_{3}}\circ F_{g_{2}}=F_{g_{2}g_{3}}$.
Uniqueness is obvious.
\end{proof}
Set $G=G_{p}(A)=B\rtimes A$.

\begin{prop}
The natural map $\sigma:N_{Aut(G)}(B)\rightarrow Aut(G/B)$ is surjective.
\end{prop}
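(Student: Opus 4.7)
The plan is to lift any $\phi \in Aut(A) \cong Aut(G/B)$ to an automorphism $\tilde\phi$ of $G$ that stabilizes $B$. I will define $\tilde\phi$ on the two factors of $G = B \rtimes A$ separately: on the complement $A \leq G$ let $\tilde\phi$ act as $\phi$, and on $B$ let it act as the unique $\mathbb{F}_p$-linear map $\psi$ sending $v_{g}^{i} \mapsto v_{\phi(g)}^{i}$ for all $g \in A^{\#}$ and $i = 1,\dots,r$ (so in particular $0 \mapsto 0$). Since $\phi$ permutes $A^{\#}$, the map $\psi$ permutes the chosen basis of $B$, hence $\psi \in GL(B) = Aut(B)$.

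The single nontrivial step is verifying the intertwining identity $\psi(v^{g}) = \psi(v)^{\phi(g)}$ for all $v \in B$ and $g \in A$; this is the conjugation relation of the semidirect product translated through the pair $(\psi,\phi)$. By linearity it suffices to check on the basis. For $v = v_{g_1}^{i}$, applying $\psi$ to the defining formula $(v_{g_1}^{i})^{g} = v_{g_1 g}^{i} - v_{g}^{i}$ gives the left-hand side as $v_{\phi(g_1 g)}^{i} - v_{\phi(g)}^{i}$, while the right-hand side is $(v_{\phi(g_1)}^{i})^{\phi(g)} = v_{\phi(g_1)\phi(g)}^{i} - v_{\phi(g)}^{i}$; these agree because $\phi$ is a homomorphism. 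Stated informally, the action formula on $B$ is written purely in terms of the multiplication in $A$, which $\phi$ preserves, so the intertwining is automatic.

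Granted the intertwining relation, I set $\tilde\phi(vg) = \psi(v)\phi(g)$ for $v \in B$, $g \in A$. This is a bijection of the underlying set of $G$, and the multiplicativity check is essentially the one performed in the proof of the preceding lemma (lem*), the only change being that commuting $\psi$ past $\phi(g)$ produces conjugation by $\phi(g)$ rather than by $g$ — which is precisely what the intertwining identity supplies. Hence $\tilde\phi \in Aut(G)$; by construction $\tilde\phi(B) = \psi(B) = B$ and $\tilde\phi$ induces $\phi$ on $G/B$, so $\sigma(\tilde\phi) = \phi$ and surjectivity follows.

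No real obstacle arises in this proposition: the basis $\{v_{g}^{i}\}$ and the $A$-action on $B$ have been engineered so that $Aut(A)$ naturally acts on $B$ by basis permutation. The substantive content of Section 4 lies instead in showing the $<p>$-wildness of $G_{p}(A)$, for which this proposition is only the compatibility lemma that allows the semidirect construction to be iterated without obstruction at the $A$-quotient.
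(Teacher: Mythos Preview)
Your argument is correct and follows exactly the paper's approach: given $\phi\in Aut(A)\cong Aut(G/B)$, define $\tilde\phi$ by $\tilde\phi(g)=\phi(g)$ and $\tilde\phi(v_g^i)=v_{\phi(g)}^i$, and observe that this lies in $N_{Aut(G)}(B)$ and induces $\phi$ on the quotient. The paper states this in two lines and omits the intertwining verification you spell out; your added detail is fine, though note that the relevant multiplicativity check you invoke is that of Lemma~4 (the extension lemma for $B\rtimes A$), not the unnumbered lemma establishing the $A$-action.
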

\begin{proof}
Let $f\in Aut(G/B).$ Define $F\in Aut(A)$ via $F(g)B=f(gB)$. Now
let $\tilde{F}\in Aut(G)$ be the automorphism induced from $F$ (i.e.,
$\tilde{F}(g)=F(g),\,\tilde{F}(v_{g}^{i})=v_{F(g)}^{i}$). Clearly
$\tilde{F}\in N_{Aut(G)}(B)$ and $\sigma(\tilde{F})=f.$
\end{proof}
Before proving that $G$ is $<p>$-wild, we introduce some automorphisms.
Let $\psi\in GL(B)$ be defined via $\psi(v_{g}^{i})=v_{g}^{i+1}$
for $i=1,..,r-1$ and $\psi(v_{g}^{r})=v_{g}^{1}$. Let $\phi\in GL(B)$
be defined via $\phi(v_{g}^{i})=v_{g}^{i}$ for $i=1,..,r-1$ and
$\phi(v_{g}^{r})=v_{g}^{r}-v_{g}^{1}$. It is easily seen that both
$\psi$ and $\phi$ satisfy the conditions of Lemma 4. We shall use
the same letters to denote their respectable extensions in $Aut(G)$.
Define automorphisms $\psi_{1},...,\psi_{r}:G\longrightarrow G$ via
$\psi_{i}(gv)=gv_{g}^{i}v$. These maps are indeed automorphisms as
\[
\psi_{i}(g_{1}v_{1}g_{2}v_{2})=\psi_{i}(g_{1}g_{2}(v_{1})^{g_{2}}v_{2})=g_{1}g_{2}v_{g_{1}g_{2}}^{i}(v_{1})^{g_{2}}v_{2}
\]
 and 
\[
\psi_{i}(g_{1}v_{1})\psi_{i}(g_{2}v_{2})=g_{1}v_{g_{1}}^{i}v_{1}g_{2}v_{g_{2}}^{i}v_{2}=g_{1}g_{2}(v_{g_{1}}^{i}v_{1})^{g_{2}}v_{g_{2}}^{i}v_{2}
\]
 and 
\[
(v_{g_{1}}^{i}v_{1})^{g_{2}}v_{g_{2}}^{i}v_{2}=(v_{g_{1}}^{i})^{g_{2}}(v_{1})^{g_{2}}v_{g_{2}}^{i}v_{2}=v_{g_{1}g_{2}}^{i}-v_{g_{2}}^{i}+(v_{1})^{g_{2}}+v_{g_{2}}^{i}+v_{2}=v_{g_{1}g_{2}}^{i}+(v_{1})^{g_{2}}+v_{2}=v_{g_{1}g_{2}}^{i}(v_{1})^{g_{2}}v_{2}.
\]

\begin{lem}
Assume $gt\in G$ ($g\in A$,~$t\in B$) is of order $p$ and $g\neq0$.
Then $g$ and $gt$ are $Aut(G)$-conjugates.
\end{lem}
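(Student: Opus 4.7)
The plan is to exhibit an automorphism of $G$ mapping $g$ to $gt$ as a composition of a product of powers of the $\psi_i$'s with an inner automorphism of $G$ by an element of $B$. Since $gt$ has order $p$ and $g\neq 0$, the element $g$ must have order exactly $p$, and expanding $(gt)^p=1$ inside $G=B\rtimes A$ shows that $t\in\ker N_g$, where $N_g:=\sum_{k=0}^{p-1}g^k$ acts on $B$ via the $A$-action. A short induction (using that each $\psi_i$ fixes $B$ pointwise) gives $\psi_1^{a_1}\cdots\psi_r^{a_r}(g)=g\cdot\sum_i a_i v_g^i$ for any $a_i\in\mathbb{F}_p$, and a direct calculation with the relation $gu=u^{g^{-1}}g$ shows that inner conjugation by $s\in B$ sends any $gu$ (with $u\in B$) to $g(u+s^g-s)$. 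Consequently it suffices to prove that the classes $[v_g^1],\ldots,[v_g^r]$ span $\ker N_g/(g-1)B$, where $(g-1)B$ denotes the image of $s\mapsto s^g-s$: granted this, one picks $a_i$'s and then $s$ so that $t-\sum a_i v_g^i=s^g-s$, and the composition (inner by $s$) after $\psi_1^{a_1}\cdots\psi_r^{a_r}$ sends $g$ to $gt$.

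The $A$-module splitting $B=B_1\oplus\cdots\oplus B_r$ reduces this to a per-summand statement. Each $B_i$ is isomorphic as a right $A$-module to the augmentation ideal $I_A\subseteq\mathbb{F}_p[A]$ via $v_h^i\leftrightarrow h-1$, so it is enough to show that inside $I_A$ the class $[g-1]$ generates the one-dimensional quotient $\ker N_g/(g-1)I_A$. Both assertions follow from short computations in $\mathbb{F}_p[A]$. The relation $(g-1)N_g=0$ together with a dimension count gives $\ker(\cdot N_g)|_{\mathbb{F}_p[A]}=\mathbb{F}_p[A](g-1)$, which lies in $I_A$ and has dimension $[A:\langle g\rangle](p-1)$; while the left annihilator of $g-1$ (the elements $y$ with $yg=y$) is spanned by the left-coset-norms $hN_g$ and has dimension $[A:\langle g\rangle]$, so that $\dim I_A(g-1)=[A:\langle g\rangle](p-1)-1$. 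Hence the quotient is one-dimensional. Finally, $g-1\notin I_A(g-1)$: otherwise $g-1=y(g-1)$ for some $y\in I_A$, so $y-1$ would be an $\mathbb{F}_p$-combination of the $hN_g$'s, each of which has augmentation $p=0$; this forces $\mathrm{aug}(y)=1$, contradicting $y\in I_A$.

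The main technical point is the last nonvanishing check; everything else is a direct assembly of pieces already present in the construction of $G_p(A)$ and in the definitions of $\psi_i$ and of inner conjugation. Once the quotient is identified as one-dimensional and spanned by $[v_g^i]$ on each summand, the desired automorphism can be written down explicitly from the chosen $(a_i)$ and $s$.
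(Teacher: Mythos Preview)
Your proof is correct and takes a genuinely different, more conceptual route than the paper. The paper works directly with the basis $\{v_h^i\}$: after observing $\sum_{k=0}^{p-1} t_i^{(g^k)}=0$, it proves via explicit coset-by-coset computations that one can subtract an element of $[g,B_i]$ from $t_i$ to land in $\langle v_g^i\rangle$, then finishes with the same combination of an inner automorphism and a product of $\psi_i$'s. You instead recognize each $B_i$ as the augmentation ideal $I_A\subseteq\mathbb{F}_p[A]$ (via $v_h^i\leftrightarrow h-1$) and reduce the whole question to the clean group-algebra statement that $\ker(\cdot N_g)\big/ I_A(g-1)$ is one-dimensional with generator $[g-1]$, which you settle by a dimension count and an augmentation argument. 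This avoids the paper's somewhat laborious claims about the coefficients $a_{h\xi}$ and makes the structure transparent; the paper's approach, on the other hand, is entirely self-contained and requires no module-theoretic language. One cosmetic slip: your formula for inner conjugation by $s$ should read $gu\mapsto g(u+s-s^g)$ rather than $g(u+s^g-s)$ (compare the paper's computation $(gt)^u=gt(u^{-1})^g u$), but since $(g-1)B$ is closed under negation this has no effect on the argument---just replace $s$ by $-s$ when you solve for it.
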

\begin{proof}
For every $k\geq1$ we have $(gt)^{k}=g^{k}t^{(g^{k-1})}t^{(g^{k-2})}\cdot...\cdot t^{g}t$.
Thus $ord(g)=p$ and ${\textstyle {\displaystyle \sum_{k=0}^{p-1}t^{(g^{k})}=0}}$.
Write $t=t_{1}+..+t_{r}$ where $t_{i}\in B_{i}$. Thus also ${\displaystyle \sum_{k=0}^{p-1}t_{i}^{(g^{k})}=0}$
for all $i=1,...,r$. We now focus on $t_{1}$. Write $t_{1}={\displaystyle \sum_{h\in A}a_{h}v_{h}^{1}}$
where $a_{0}=0$. 

\textsl{Claim}: For all $h\in A\setminus<g>$ we have ${\displaystyle \sum_{\xi\in<g>}}a_{h\xi}=0$. 

\textsl{Proof}: For every $k\geq0$ we have 
\[
t_{1}^{(g^{k})}={\displaystyle \sum_{h\in A}a_{h}v_{hg^{k}}^{1}-\sum_{h\in A}a_{h}v_{g^{k}}^{1}=\sum_{h\in A}a_{hg^{-k}}v_{h}^{1}-\sum_{h\in A}a_{h}v_{g^{k}}^{1}}.
\]
Thus:
\[
{\displaystyle 0=\sum_{k=0}^{p-1}t_{1}^{(g^{k})}=\big(\sum_{h\in A}(\sum_{\xi\in<g>}a_{h\xi}})v_{h}^{1}\big)-{\displaystyle \sum_{\xi\in<g>}(\sum_{h\in A}a_{h}})v_{\xi}^{1}
\]

\[
={\displaystyle \big(\sum_{h\in A\setminus<g>}(\sum_{\xi\in<g>}a_{h\xi}})v_{h}^{1}\big)+{\displaystyle \sum_{\xi\in<g>}\big((\sum_{\eta\in<g>}a_{\eta})-\sum_{h\in A}a_{h}}\big)v_{\xi}^{1}
\]

and the claim follows.

$\boxempty$ of the claim.

\textsl{Claim}: Assume $h\in A\setminus<g>$. Then there exists 
\[
c\in[g,B_{1}]\cap(<v_{h\xi}^{1}\mid\xi\in<g>>\oplus<v_{\xi}^{1}\mid\xi\in<g>>)
\]

such that 
\[
t_{1}+c\in<v_{f}^{1}\mid f\in A\setminus\{h\xi\mid\xi\in<g>\}>.
\]

\textsl{Proof}: First, note that if $1\leq k<p$ and $v\in B_{1}$,
then $v^{(g^{k})}-v=u^{g}-u$ where $u={\displaystyle \sum_{l=0}^{k-1}v^{(g^{l})}}$
and so $[g^{k},B_{1}]\leq[g,B_{1}]$. Now set $y{\displaystyle =\sum_{\xi\in<g>}(a_{h\xi}v_{h}^{1})^{\xi}-a_{h\xi}v_{h}^{1}}$.
Thus $y\in[g,B_{1}]$. Also, 
\[
y={\displaystyle \sum_{\xi\in<g>}(a_{h\xi}v_{h}^{1})^{\xi}-a_{h\xi}v_{h}^{1}}=\sum_{\xi\in<g>}(a_{h\xi}v_{h\xi}^{1}-a_{h\xi}v_{\xi}^{1}-a_{h\xi}v_{h}^{1})
\]
 
\[
{\displaystyle =\sum_{\xi\in<g>}a_{h\xi}v_{h\xi}^{1}}-{\displaystyle \sum_{\xi\in<g>}a_{h\xi}v_{\xi}^{1}-\sum_{\xi\in<g>}a_{h\xi}}v_{h}^{1}={\displaystyle \sum_{\xi\in<g>}a_{h\xi}v_{h\xi}^{1}-\sum_{\xi\in<g>}a_{h\xi}v_{\xi}^{1}}.
\]
Now $c=-y$ is as needed.

$\boxempty$ of the claim.

\textsl{Claim}: There exists $z_{1}\in[g,B_{1}]$ such that $t_{1}+z_{1}\in<v_{g}^{1}>$.

\textsl{Proof: }It follows from the previous claim that there exists
$w\in[g,B_{1}]$ such that $t_{1}+w\in<v_{\xi}^{1}\mid\xi\in<g>>$.
Now note that for every $k\geq1$ we have 
\[
(v_{g^{k-1}}^{1})^{g}-v_{g^{k-1}}^{1}=v_{g^{k}}^{1}-v_{g^{k-1}}^{1}-v_{g}^{1}.
\]
This easily implies that there exists $u\in[g,B_{1}]$ such that $t_{1}+w+u\in<v_{g}^{1}>$.
Thus $z_{1}=w+u$ is as needed.

$\boxempty$ of the claim.

We now complete the proof of the lemma. For each $i=1,...,r$ take
$z_{i}\in[g,B_{i}]$ such that $t_{i}z_{i}\in<v_{g}^{i}>$ (the existence
of such $z_{i}$ for $i=1$ was proved above. For arbitrary $i$ the
proof is identical). Set $z=z_{1}\cdot...\cdot z_{r}$. Thus $z\in[g,B]$
and $tz\in<v_{g}^{1},...,v_{g}^{r}>$. Note that if $v_{1},v_{2}\in B$,
then $(v_{1}^{-1})^{g}v_{1}(v_{2}^{-1})^{g}v_{2}=((v_{1}v_{2})^{-1})^{g}v_{1}v_{2}$
and so for every $v\in[g,B]$ there exists $u\in B$ such that $v=(u^{-1})^{g}u.$
In particular, $z=(u^{-1})^{g}u$ for some $u\in B$. Now 
\[
(gt)^{u}=g^{u}t^{u}=g^{u}t=g(u^{-1})^{g}ut=gt(u^{-1})^{g}u=gtz.
\]
Also, it is easily seen that there exists $\varphi\in<\psi_{1},...,\psi_{r}>$
such that $\varphi(gtz)=g$. Now $g=\varphi((gt)^{u})$ and we are
done.
\end{proof}
\begin{prop}
$G$ is $<p>$-wild.
\end{prop}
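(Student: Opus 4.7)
The plan is to argue by contradiction: suppose some $x\in G$ of order $p$ satisfies $\langle x\rangle\preceq G$. I would split into the cases $x\notin B$ and $x\in B$. In the former, I write $x=gt$ with $g\in A^{\#}$ of order $p$ and $t\in B$; by Lemma~5, $\langle g\rangle$ lies in the $\mathrm{Aut}(G)$-orbit of $\langle x\rangle$, so the $G$-conjugacy class of $\langle g\rangle$ is characteristic too. I would then apply $\psi_1$: using $(gv_g^1)^k=g^k v_{g^k}^1$, the hypothesis forces some $\alpha=wh\in G$ to satisfy $\alpha g\alpha^{-1}=g^k v_{g^k}^1$, equivalently $hgh^{-1}=g^k$ and the cocycle equation $w^{g^k}-w=v_{g^k}^1$. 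Projecting onto $B_1$ with $w_1=\sum_{f\in A^{\#}}b_f v_f^1$, this reduces to (A) $b_{fg^{-k}}=b_f$ for all $f\neq g^k$, and (B) $-b_{g^k}-\sum_f b_f=1$. A chain argument using that $g^{-k}$ generates $\langle g\rangle$ would force $b_f=0$ on $\langle g\rangle$ and $b$ constant on each nontrivial coset of $\langle g\rangle$ in $A$; as each coset has size $p$, summing yields $\sum_f b_f=0\in\mathbb{F}_p$, contradicting (B).

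For $x\in B$, I write $x=\sum a_g^i v_g^i$ as an $(|A|-1)\times r$ coefficient matrix $M$ over $\mathbb{F}_p$. The $G$-conjugacy class of $\langle x\rangle$ inside $B$ is $\{\langle cM^h\rangle : c\in\mathbb{F}_p^{\times},h\in A\}$, which preserves the support $\mathrm{supp}(M):=\{i:C_i\neq 0\}$. The automorphisms $\psi$ and $\phi$ (extended via Lemma~4) act on $B$ as cyclic column shift and the transvection $C_1\mapsto C_1-C_r$, respectively; their $\psi$-conjugates yield all adjacent transvections $C_k\mapsto C_k-C_{k-1}$ (indices mod $r$), together generating $SL_r(\mathbb{F}_p)$ acting on the columns of $M$. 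If $\mathrm{supp}(M)\subsetneq\{1,\ldots,r\}$, then applying $\psi$ shifts the support nontrivially (as $r$ is prime), so $\psi(\langle x\rangle)$ is not $G$-conjugate to $\langle x\rangle$. If $\mathrm{supp}(M)$ is full with linearly dependent columns, some $\alpha\in SL_r$ produces a zero column, reducing to the previous subcase. In the remaining subcase of linearly independent columns (requiring $|A|-1\geq r$), the $SL_r$-stabilizer of $\langle M\rangle$ is trivial (since $r\nmid p-1$), and applying a transvection $\alpha=I-E_{j,i}$ with $M\alpha=cM^h$ yields, via the recursion $a_{g'h^n}^i=c^n a_{g'}^i+nc^n a_{g'}^j$ together with $c^{\mathrm{ord}(h)}=1$ obtained from the other columns, either a zero column or $p\mid\mathrm{ord}(h)$; I would eliminate the latter by varying $j$ and invoking $r\nmid|A|$.

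The delicate step will be the linearly-independent-columns subcase, where the combinatorics of the $SL_r$-action on columns interacts with the $A$-action on rows, and the full strength of the coprimality conditions $r\nmid|A|(p-1)$ from the definition of $r$ is needed to close the argument.
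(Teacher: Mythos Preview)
Your treatment of the case $x\notin B$ is correct and essentially identical to the paper's: both reduce via Lemma~5 to $\langle g\rangle\preceq G$, apply $\psi_1$, and derive a contradiction from the resulting cocycle identity in $B_1$. Your coset--chain argument is a mild repackaging of the paper's direct coefficient computation.

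For $x\in B$, your route diverges substantially from the paper's, and your third subcase has a genuine gap. The paper never invokes $SL_r$ or a column-rank trichotomy. It argues directly with $\psi$ alone: since $\psi$ fixes $A$ pointwise and has order $r$, from $\psi(\langle x\rangle)=\langle x\rangle^{h}$ one gets $\langle x\rangle=\psi^{r}(\langle x\rangle)=\langle x\rangle^{h^{r}}$; as $\gcd(r,|A|)=1$ this forces $\langle x\rangle^{h}=\langle x\rangle$, hence $\psi(\langle x\rangle)=\langle x\rangle$, and then $r\nmid p-1$ gives $\psi(x)=x$. Thus \emph{all columns of $M$ are equal}, and a single application of $\phi$ zeroes the first column while $G$-conjugation preserves the column support --- contradiction. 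In particular, your ``linearly independent columns'' subcase is vacuous once $\psi$ is used this way.

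Your stated plan for that subcase does not close as written: the transvections $I-E_{j,i}$ all have order $p$, so ``varying $j$'' only produces elements $h_{j}\in A$ with $p\mid\mathrm{ord}(h_{j})$, and there is no mechanism by which the hypothesis $r\nmid |A|$ enters through these $p$-elements. The coprimality with $r$ only becomes visible when you bring in an order-$r$ automorphism such as $\psi$; but once you do, the short argument above finishes immediately and the $SL_r$ apparatus is unnecessary. (A smaller technical point: your recursion $a^{i}_{g'h^{n}}=c^{n}a^{i}_{g'}+nc^{n}a^{j}_{g'}$ ignores the correction term at $g'h^{n}\in\{0,h\}$ coming from the action $(v^{i}_{g})^{h}=v^{i}_{gh}-v^{i}_{h}$; it holds cleanly only after passing to augmentation-zero coordinates on each $B_i$.)
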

\begin{proof}
Assume otherwise. Thus there exists $x\in G$ of order $p$ such that
$<x>\preceq G$. 

First assume that $x\in B$. Thus $\psi(<x>)=<x>^{h}$ for some $h\in A$.
As $\psi(h)=h$, we get that for every $k>0$ $\psi^{k}(<x>)=<x>^{(h^{k})}$.
As $ord(\psi)=r$, we get $<x>=\psi^{r}(<x>)=<x>^{(h^{r})}$. As $r\nmid\mid A\mid$,
we get $<x>=<x>^{h}$. Thus we get $\psi(<x>)=<x>^{h}=<x>$. As $r\nmid p-1$,
we get $\psi(x)=x$. Now write $x={\textstyle {\displaystyle \sum_{i=1}^{r}\sum_{g\in A}a_{g}^{i}v_{g}^{i}}}$
where $a_{0}^{1}=...=a_{0}^{r}=0$. As $\psi(x)=x$, we get that for
each $g$, $a_{g}^{1}=...=a_{g}^{r}$. Define $a_{g}=a_{g}^{1}=...=a_{g}^{r}$.
So $x={\displaystyle \sum_{i=1}^{r}\sum_{g\in A}a_{g}v_{g}^{i}}$.
Now $\phi(x)={\displaystyle \sum_{i=2}^{r}}{\displaystyle \sum_{g\in A}}a_{g}v_{g}^{i}$,
so $\phi(<x>)\leq B_{2}\oplus...\oplus B_{r}$, yet no conjugate of
$<x>$ is a subgroup of $B_{2}\oplus...\oplus B_{r}$ - a contradiction. 

Thus $x=gt$ ($g\in A$,~$t\in B$) where $g\neq0$. From Lemma 5,
$g$ and $gt$ are $Aut(G)$-conjugates. In particular, $<g>\preceq G$.
Now $\psi_{1}(<g>)$ is $G$-conjugate to $<g>$. Thus $gv_{g}^{1}=(g^{k})^{hv}$
for some $k\geq1,\,h\in A$ and $v\in B$. We now have 
\[
gv_{g}^{1}=(g^{k})^{hv}=((g^{k})^{h})^{v}=(g^{k})^{h}(v^{-1})^{((g^{k})^{h})}v.
\]
Thus $(g^{k})^{h}=g$, so we also get $v_{g}^{1}=(v^{-1})^{g}v$.
Write $v=v_{1}+...+v_{r}$ where $v_{i}\in B_{i}$. Clearly $v_{g}^{1}=(v_{1}^{-1})^{g}v_{1}$.
Write $v_{1}={\displaystyle \sum_{f\in A}a_{f}v_{f}^{1}}$ where $a_{0}=0$.
Now 
\[
v_{g}^{1}=v_{1}-(v_{1})^{g}={\displaystyle \sum_{f\in A}(a_{f}-a_{fg^{-1}})v_{f}^{1}}+{\displaystyle \sum_{f\in A}a_{f}v_{g}^{1}}
\]
and thus
\[
1=a_{g}-a_{0}+{\displaystyle \sum_{f\in A}a_{f}}={\displaystyle (\sum_{f\in A\setminus\{0,g\}}a_{f}})+2a_{g}.
\]
Also, for every $f\in A\setminus\{0,g\}$ we have $a_{f}=a_{fg^{-1}}$.
It follows that for every $f\in A\setminus<g>$ and $\xi\in<g>$ we
have $a_{f}=a_{f\xi}$. In particular, ${\displaystyle {\displaystyle \sum_{f\in A\setminus<g>}a_{f}=0}}$.
Thus ${\displaystyle 1=(\sum_{f\in<g>\setminus\{0,g\}}a_{f}})+2a_{g}$.
But for every $2\leq k<p$ we have $a_{g^{k}}=a_{g^{k-1}}$ and thus
$a_{g^{k}}=a_{g}$. We now get ${\displaystyle 1=(\sum_{f\in<g>\setminus\{0,g\}}a_{f}})+2a_{g}=(p-2)a_{g}+2a_{g}=0$
- a contradiction. 
\end{proof}

\end{document}